\documentclass{article}

\usepackage{graphicx,xcolor,tikz,float}
\usepackage{amsthm,amsmath,amssymb}
\usepackage{comment,hyperref}

\newtheorem{theorem}{Theorem}[section]

\newtheorem{definition}[theorem]{Definition}
\newtheorem{corollary}{Corollary}[theorem]
\newtheorem*{claim}{Claim}

\newtheorem*{remark}{Remark}

\newtheorem{proposition}[theorem]{Proposition}

\title{A Comparison of Gauge Dimension and\\ Effective Dimension}
\author{Yiping Miao}
\date{}

\begin{document}

\maketitle

\begin{abstract}
We characterize the gauge profiles of $\mathcal{D}_s$, the set of reals with effective dimension $s$, and $\mathcal{D}_{\leq s}$, the set of reals with effective dimension $\leq s$. Let $W(s)$ be the set of reals that are $s$-well approximable. This gives us a separation between $\mathcal{D}_{\leq s}$ and $W(2/s)$ in terms of Hausdorff measure.
\end{abstract}

\section{Introduction}

A {\bf gauge function} $f:\mathbb{R}^+\rightarrow\mathbb{R}^+$ is a continuous, non-decreasing function with $\lim_{x\rightarrow 0+}f(x)=0$. For any $A\subseteq 2^\omega$, we define the $f$-measure of $A$ as follows

$$H^f(A)=\lim_{n\rightarrow \infty}\inf\left\{\sum_i f(2^{-|\sigma_i|}):\bigcup_i [\sigma_i]\supseteq A\text{ and }\forall i\ (|\sigma_i|\geq n)\right\}.$$

The gauge profile of $A$ is the set of gauge functions $\{f:H^f(A)>0\}$.

There are various ways to compare two gauge functions. We use the literal ordering here.

\begin{definition}
For any $f,g:\mathbb{R}^+\rightarrow \mathbb{R}^+$, $f(x)\leq^* g(x)$ if there is some $\delta>0$ such that for any $x\in (0,\delta)$, $f(x)\leq g(x)$.
\end{definition}

Now we introduce the main object we study in this paper. For any partial computable function $f:2^{<\omega}\rightarrow 2^{<\omega}$, we define $C_f(\sigma)$ to be the least length of string $\tau$ that can describe $\sigma$ via a program computing $f$, if it exists, otherwise $C_f(\sigma)=\infty$.
$$C_f(\sigma)=\inf\{|\tau|: \exists \tau\ f(\tau)=\sigma\}.$$

There are optimal partial computable functions $U$ in the sense that for any partial computable function $f$, there is some $D>0$ such that for any $\sigma$,
$$C_U(\sigma)\leq C_f(\sigma)+D,$$
Let $C_U(\sigma)$ be the {\bf plain Kolmogorov complexity} of $\sigma\in 2^{<\omega}$, which is well defined up to an additive constant. We will use $D$ as this $O(1)$ factor throughout the paper.

For $x\in 2^\omega$, let $dim(x)$ be the {\bf effective dimension} of $x$. One of the equivalent definitions is

$$dim(x)=\liminf_{n\rightarrow\infty}C(x\upharpoonright n)/n,$$

There are other ways to define effective Hausdorff dimensions, related works include \cite{cai1994hausdorff,lutz2000gales,reimann2005effective}. See also Downey and Hirschfeld's book\cite{downey2010algorithmic}.

\begin{definition}
For $0\leq s\leq 1$, let 
$$\mathcal{D}_s=\{x\in 2^\omega:dim(x)=s\},$$
$$\mathcal{D}_{\leq s}=\{x\in 2^\omega: dim(x)\leq s\}.$$
\end{definition}

Our main theorem characterizes the gauge profiles of $\mathcal{D}_s$ and $\mathcal{D}_{\leq s}$ under a mild restriction. A sufficient condition for this restriction is given in \cite{staiger2017exact}.

\vspace{2mm}
\noindent {\bf Theorem \ref{gauge_dim}.} If $x^{-1}f(x)$ is monotonically decreasing, and $0\leq s<1$, then the following are equivalent:
\begin{enumerate}
    \item $H^f(\mathcal{D}_s)>0$,
    \item $H^f(\mathcal{D}_{\leq s})>0$,
    \item $\forall r\in (s,1]\ f(x)\not\leq^* x^r$.
\end{enumerate}
\vspace{2mm}

This gives us a comparison between effective dimension and Diophantine approximation.

\begin{definition}[\cite{beresnevich2006measure}]
For $s\geq 2$, let $W(s)$ be the set of $s$-well approximable numbers, i.e.
\begin{equation*}
\begin{split}
W(s)=&\{x\in [0,1]: |x-\frac{p}{q}|<q^{-s}\\
&\text{ for infinitely many integer pairs }(p,q) \text{ with }q>0\}.
\end{split}
\end{equation*}
\end{definition}

Dirichlet's approximation theorem shows $W(2)=[0,1]$. A related notion is the irrationality exponent. The {\bf irrationality exponent} $a$ of $x\in [0,1]\setminus \mathbb{Q}$ is defined as 
$$a=\sup \{s:x\in W(s)\}.$$
The reals with infinite irrationality exponent are called Liouville numbers.

We do not choose to use this notion since we want to use the gauge profile of $W(s)$ obtained by Jarn\'{i}k \cite{jarnik1999diophantische}. See \cite{becher2018irrationality,calude2013liouville} for related work on irrationality exponent and effective Hausdorff dimensions.

$W(2/s)$ and $\mathcal{D}_{\leq s}$ are closely related. Calude and Staiger $\cite{calude2013liouville}$ showed that $W(2/s)\subseteq\mathcal{D}_{\leq s}$. Both sets have Hausdorff dimension $s$ \cite{jarnik1931simultanen,besicovitch1934sets,cai1994hausdorff,reimann2004computability}. Our characterization separates $W(2/s)$ and $\mathcal{D}_{\leq s}$ in measure.

\vspace{2mm}
\noindent {\bf Corollary \ref{separate}.} There is a gauge function $f$ such that $H^f(W(2/s))=0$ and $H^f(\mathcal{D}_{\leq s})>0$.
\vspace{2mm}

\section{Gauge Profile of the Set of Effective Dimension \texorpdfstring{$s$}{s}}

The following was first proved by Jin-Yi Cai and Juris Hartmanis for computable $s$ \cite{cai1994hausdorff}, and Jan Reimann for general $s$ \cite{reimann2004computability}. See \cite{ya1984coding,lutz2003dimensions,lutz2005effective,staiger1993kolmogorov} for related works.

\begin{theorem}[Cai, Hartmanis\cite{cai1994hausdorff}; Reimann\cite{reimann2004computability}]
$$dim_H(\mathcal{D}_s)=s.$$
\end{theorem}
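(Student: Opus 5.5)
The proof of $dim_H(\mathcal{D}_s)=s$ splits into an upper bound and a lower bound. For the upper bound I will cover $\mathcal{D}_s$ by $\mathcal{D}_{\leq s}$ and show $H^{x^t}(\mathcal{D}_{\leq s})=0$ for every $t>s$.

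\textbf{Upper bound.} Fix $t>s$ and choose a rational $r$ with $s<r<t$. Every $x\in\mathcal{D}_{\leq s}$ satisfies $C(x\upharpoonright n)<rn$ for infinitely many $n$. For a given $n$, the set $S_n=\{\sigma\in 2^n: C(\sigma)<rn\}$ has at most $2^{rn}$ elements, since there are fewer than $2^{rn}$ descriptions of length $<rn$. Hence for every $N$ the family $\bigcup_{n\geq N}S_n$ covers $\mathcal{D}_{\leq s}$. Its cost is
$$\sum_{n\geq N}2^{rn}2^{-tn}\rightarrow 0 \quad (N\rightarrow\infty),$$
so $H^{x^t}(\mathcal{D}_{\leq s})=0$. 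This gives $dim_H(\mathcal{D}_s)\leq dim_H(\mathcal{D}_{\leq s})\leq s$.

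\textbf{Lower bound.} For $s=0$ there is nothing to prove, so take $0<s$ and $t<s$. I need $H^{x^t}(\mathcal{D}_s)>0$. My plan is to build a closed set $P\subseteq\mathcal{D}_s$ that carries a mass distribution $\mu$ with $\mu([\sigma])\leq c\,2^{-t|\sigma|}$; the mass distribution principle then gives $H^{x^t}(P)\geq \mu(P)/c>0$.

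The natural starting point is to dilute Martin-L\"of random sequences. Fix a sequence of positions $B\subseteq\omega$ with density $s$, and for each $z\in 2^\omega$ let $x_z$ copy the bits of $z$ into the positions in $B$ and put $0$ at every other position. Let $\mu$ be the image of Lebesgue measure under $z\mapsto x_z$. A cylinder of length $n$ then has $\mu$-measure $2^{-|B\cap n|}\approx 2^{-sn}\leq c\,2^{-tn}$ for large $n$, so $\mu$ has the required decay.

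It remains to check that $dim(x_z)=s$ for $\mu$-almost every $z$, which I will deduce from the randomness of $z$:
\begin{itemize}
\item The bound $C(x_z\upharpoonright n)\leq sn+o(n)$ holds because $x_z\upharpoonright n$ is determined by $n$ together with $z\upharpoonright|B\cap n|$, given that $B$ is computable.
\item The bound $C(x_z\upharpoonright n)\geq sn-o(n)$ holds because $z$ is Martin-L\"of random, so $C(z\upharpoonright m)\geq m-O(\log m)$, and $z\upharpoonright|B\cap n|$ can be recovered from $x_z\upharpoonright n$.
\end{itemize}
The set of random $z$ has Lebesgue measure $1$, so its image has full $\mu$-measure and lies in $\mathcal{D}_s$.

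\textbf{Main obstacle: noncomputable $s$.} If $s$ is not computable, no computable $B$ of density exactly $s$ exists, and the upper estimate above breaks, because a noncomputable $B$ cannot be decoded for free. I would first try taking $B$ computable with density $s'$ for a computable $s'$ slightly below $s$ but above $t$. That yields points of dimension $s'$, not $s$. To get dimension exactly $s$, I would instead approximate $s$ from below by computable rationals $s_k\nearrow s$ and let $x$ interleave segments of growing length with densities $s_k$. The dimension then becomes $\liminf$ of these densities, which is $s$, and the measure estimate holds with exponent $\inf_k s_k$ restricted to $k\geq k_0$, which exceeds $t$. The part needing care is the bookkeeping: the segment lengths must grow fast enough that transient densities do not lower the $\liminf$ below $s$, and the index sequence can be hardwired into a nonuniform description with only $o(n)$ overhead, using the standard cost estimate for describing the block boundaries. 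Reimann's version handles exactly this step, and I would follow it.
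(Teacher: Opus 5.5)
Your proof is correct, and its lower-bound half is essentially the paper's construction. The paper only cites this theorem, but the $3\Rightarrow 1$ direction of Theorem~\ref{gauge_dim} with $f(x)=x^t$ reproves $dim_H(\mathcal{D}_s)\geq s$ the same way. It pushes Lebesgue measure onto a perfect tree whose splitting levels have density tending to $s$, and it hardwires the block boundaries at $o(n)$ cost: taking $l_{n+1}>2^n$ makes the $O(n\log l_{n+1})$ overhead negligible. Your counting cover for the upper bound is the standard argument behind the fact $dim_H(\mathcal{D}_{\leq s})\leq s$, which the paper simply quotes.

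Two differences in detail are worth noting.

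\emph{Direction of approximation.} The paper approximates $s$ from above. This gives the sharper conclusion $H^f(\mathcal{D}_s)>0$ for every gauge $f$ with $x^{-1}f(x)$ decreasing that is not eventually below any $x^r$, $r>s$; this includes $x^s$ itself. Your approximation from below only gives $H^{x^t}(\mathcal{D}_s)>0$ for $t<s$, which is all the dimension statement needs.

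\emph{Complexity lower bound.} The paper takes $z$ random relative to the tree $T$ and uses $dim(x)\geq dim^T(x)$, so $T$ only ever has to be described at the endpoints $l_{n+1}$, in the upper bound. In your unrelativized version, for noncomputable $B$ the lower estimate fails too, not just the upper one: recovering $z\upharpoonright|B\cap n|$ from $x_z\upharpoonright n$ needs $B\upharpoonright n$. So you need $C(B\upharpoonright n)=o(n)$ at every $n$, mid-segment included. Your block construction gives this if, for example, the segments grow exponentially and the within-segment pattern is simple. Alternatively, relativize to $B$ as the paper does.

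A minor correction: a computable set can have a noncomputable ($\Delta^0_2$) density. So your obstacle only genuinely arises for $s\notin\Delta^0_2$, though your general construction covers all cases anyway.
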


We study the gauge profiles of $\mathcal{D}_s,\mathcal{D}_{\leq s}$.

2. $\Leftrightarrow$ 3. in the following theorem can be proved directly using mass transference principle \cite{beresnevich2006mass}, with the additional restriction that $x^{-1}f(x)\rightarrow\infty$ when $x\rightarrow 0$.

\begin{theorem}
\label{gauge_dim}
If $x^{-1}f(x)$ is monotonically decreasing, and $0\leq s<1$, then the following are equivalent:
\begin{enumerate}
    \item $H^f(\mathcal{D}_s)>0$,
    \item $H^f(\mathcal{D}_{\leq s})>0$,
    \item $\forall r\in (s,1]\ f(x)\not\leq^* x^r$.
\end{enumerate}
\end{theorem}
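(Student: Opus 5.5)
We prove $1\Rightarrow 2$, $2\Rightarrow 3$ and $3\Rightarrow 1$.

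The implication $1\Rightarrow 2$ is immediate, since $\mathcal{D}_s\subseteq\mathcal{D}_{\leq s}$ and $H^f$ is monotone.

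For $2\Rightarrow 3$ we argue by contraposition. Suppose some $r\in(s,1]$ has $f(x)\leq^* x^r$. Pick $t$ with $s<t<r$. Then $H^f(\mathcal{D}_{\leq s})\leq H^{x^r}(\mathcal{D}_{\leq s})$, so it suffices to show the latter is $0$. For each $n$ cover $\mathcal{D}_{\leq s}$ by the cylinders $[\sigma]$ with $|\sigma|\geq n$ and $C(\sigma)\leq t|\sigma|$. Every $x$ with $dim(x)\le s<t$ has infinitely many such prefixes. There are at most $2^{t m+1}$ such strings of length $m$. Hence the cost is at most
$$\sum_{m\geq n}2^{tm+1}2^{-rm},$$
which tends to $0$ as $n\to\infty$ because $t<r$.

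The substantive direction is $3\Rightarrow 1$. Assume that for every $r>s$ we have $f(x)>x^r$ for arbitrarily small $x$. I would build a closed set $K\subseteq\mathcal{D}_s$ and a probability measure $\mu$ on $K$ satisfying a mass distribution bound $\mu([\sigma])\leq c f(2^{-|\sigma|})$ for all $\sigma$. The mass distribution principle (Frostman's lemma in the easy direction, on $2^\omega$) then gives $H^f(K)\geq 1/c>0$.

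The construction proceeds in blocks along a rapidly increasing sequence of lengths $n_1<n_2<\cdots$ with rates $r_k\downarrow s$. At $n_k$ we require $f(2^{-n_k})\geq 2^{-r_k n_k}$. Between $n_{k-1}$ and $n_k$ the set branches freely at a controlled density, so that $\mu$ of a level-$n_k$ cylinder is about $2^{-r_k n_k}\le f(2^{-n_k})$. Hence $\mu$ decays no faster than $f$ at the checkpoints. Off the checkpoints we use that $x^{-1}f(x)$ is decreasing, i.e.\ $f(2^{-m})/2^{-m}$ is nondecreasing in $m$. This lets us fill each block by first splitting fully (mass halving per bit, which matches $f$ decaying at most by a factor of $2$ per bit) and then fixing bits (mass constant, while $f$ decreases). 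So $\mu([\sigma])\le c f(2^{-|\sigma|})$ can be arranged uniformly at every length: we choose the branching pattern greedily so that $\mu([\sigma])$ tracks $\min\bigl(f(2^{-|\sigma|}),\ \text{previous mass}\bigr)$. In this way the mass follows $f$ essentially exactly.

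To force $dim(x)=s$ for every $x\in K$, the frozen bits are filled with $0$s, and the free bits are constrained to carry density bounded by roughly $r_k$ on the block. The upper bound $\liminf C(x\upharpoonright n)/n\le s$ comes from describing $x\upharpoonright n_k$ by its free bits, of which there are about $\log(1/\mu)\approx r_k n_k$ (plus $O(\log)$ for the schedule). For this we need the schedule to be simple, so I would replace $f$ by a computable approximation along the checkpoints; the $n_k$ can be chosen from a computable sequence by using a rational grid. For the lower bound $\ge s$, I would intersect with reals whose free bits are Martin-L\"of random relative to the construction, and also ensure the mass never drops below $2^{-s' n}$ for $s'$ slightly above $s$. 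This lower bound on mass is automatic because $f(x)\ge x$ up to a constant ($x^{-1}f$ decreasing), but we need more: we cap the free-bit density below by $s$ by inserting free bits at rate at least $s$ everywhere. The measure condition then still holds, because $f(2^{-m})\ge 2^{-r m}$ only at checkpoints, yet the cap uses rate $s<r$; this needs care.

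The main obstacle is reconciling these three demands simultaneously: the mass bound $\mu\lesssim f$ at every length, liminf complexity exactly $s$ for every point of $K$ (not just $\mu$-almost every point), and effectivity of the description. I would handle "every point" by taking $K$ to be the set of $\mu$-random points of the construction intersected with the support, which still has positive $\mu$-measure. The lower complexity bound then follows from the usual fact that randomness for $\mu$ implies $C(x\upharpoonright n)\gtrsim -\log\mu([x\upharpoonright n])\ge s n - o(n)$, provided the construction guarantees $\mu([\sigma])\le 2^{-(s-\epsilon)|\sigma|}$ eventually.
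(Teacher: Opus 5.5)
Your architecture for $3\Rightarrow 1$ matches the paper's: checkpoints $l_{n+1}$ with $f(2^{-l_{n+1}})>2^{-r_n^*l_{n+1}}$, a tree that splits fully at the start of each block and then freezes with $0$s, the mass bound from monotonicity of $2^{m}f(2^{-m})$, an upper bound on $dim$ by listing free bits, and a lower bound via randomness relative to the tree. Your direct covering argument for $2\Rightarrow 3$ is correct.

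\textbf{The gap.} You leave the decisive point explicitly open: that one concrete tree meets all three requirements at once. The remedies you propose for it would not work.

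\emph{The schedule cannot be made computable.} The checkpoints sit wherever an arbitrary $f$ happens to exceed $x^{r}$. Restricting the $n_k$ to a computable grid does not make the sequence $(n_k)$ computable, and the description of $x\upharpoonright n_k$ must know the whole schedule below $n_k$.

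\emph{The greedy rule is fatal.} If $\mu([\sigma])$ tracks $f(2^{-|\sigma|})$ level by level, the split pattern below $n_k$ depends on $f$ at every level. For suitable $f$ it carries about $\log\binom{n_k}{r_kn_k}$ bits, so the free-bits-plus-schedule description is no longer of length $r_kn_k+o(n_k)$. This rule is also what forces your extra-free-bits patch, since tracking $f$ alone gives dimension below $s$ when $f$ is large (e.g.\ $f(x)=x^{s/2}$ with $s>0$).

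\emph{What is needed instead.} Use a rigid tree determined only by the $2(n+1)$ numbers $l_{k+1}$ and $r_k^*l_{k+1}+(1-r_{k-1}^*)l_k$, $k\le n$. These are simply written into the description. Choosing $l_{n+1}>2^n$ makes this cost $O(n\log l_{n+1})=O(\log^2 l_{n+1})=o(l_{n+1})$ bits, not $O(\log)$ as you state.

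\textbf{With the rigid pattern, nothing further is needed.} Let $k_m$ be the number of splitting levels below $m$, so that $\mu([\sigma])=2^{-k_{|\sigma|}}$ for $\sigma\in T$.
\begin{enumerate}
\item \emph{Density.} On $[l_n,l_{n+1}]$ the ratio $k_m/m$ increases during the splitting phase (starting from $r_{n-1}^*$) and decreases during the freezing phase (ending at $r_n^*$). Hence $k_m\ge r_n^*m\ge sm$ at every length, which is exactly what the randomness argument needs.
\item \emph{Mass bound.} The same pattern gives $\mu\le f$ at every length (for $n\ge 1$; the first block costs only a constant). While splitting, $f(2^{-m})\ge f(2^{-l_n})2^{-(m-l_n)}>2^{-r_{n-1}^*l_n-(m-l_n)}=2^{-k_m}$. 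While frozen, $f(2^{-m})\ge f(2^{-l_{n+1}})>2^{-r_n^*l_{n+1}}=2^{-k_m}$.
\end{enumerate}

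Note also that extra free bits can only help the bound $\mu\le cf$. What they could spoil is the upper count at the checkpoints, so your concern about the measure condition has the direction reversed. The same applies to your intermediate requirement that the mass not drop below $2^{-s'n}$; what you need, as you say at the end, is $\mu([\sigma])\le 2^{-(s-\epsilon)|\sigma|}$.
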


\begin{proof}
1. $\Rightarrow$ 2. is trivial. 2. $\Rightarrow$ 3. follows from the fact that $dim_H(\mathcal{D}_{\leq s})\leq s$. We prove 3. $\Rightarrow$ 1. below.

We work in $2^\omega$. From the definition of $H^f$ in $2^\omega$ we only need to consider the values of $f$ on $\{2^{-n}:n\in\omega\}$.

\begin{claim}
Under the assumptions of the theorem, 
$$\forall r\in (s,1]\forall N\in\omega\exists n\in\omega(n\geq N\wedge f(2^{-n})>2^{-r\cdot n}).$$
\end{claim}

\begin{proof}[Proof of the claim]
For any $r\in (s,1]$, fix some $\varepsilon>0$ such that $r-\varepsilon>s$. For any $N\in\omega$ such that $N\geq \varepsilon^{-1}$, there is some $x\in (0,2^{-N}]$ such that $f(x)>x^{r-\varepsilon}$. Let $n$ be the least $k\in\omega$ such that $2^{-k}\leq x$, then 
$$\frac{f(2^{-n})}{2^{-n}}\geq \frac{f(x)}{x}.$$
So $x^{-\varepsilon}\geq (2^{-N})^{-\varepsilon}=2^{\varepsilon\cdot N}\geq 2$.
$$f(2^{-n})\geq f(x)\cdot\frac{2^{-n}}{x}>x^{r-\varepsilon}\cdot \frac{2^{-n}}{x}\geq x^r\cdot\frac{2^{-n+1}}{x}>x^r.$$
The last inequality follows from $x<2^{-n+1}$ as $n$ is taken as the least.
\end{proof}

Fix a descending sequence $r_n\rightarrow s$, then for any $\varepsilon>0$, there is $x\in (0,\varepsilon)$ such that $f(x)> x^{r_n}$.

Define a sequence of integers $\{l_n\}$ recursively, let $l_0=0$, $r_{-1}=1$, $l_{n+1}$ be the least $l>2^n+l_n+\frac{2}{r_n-r_{n+1}}$ such that $\ulcorner r_n\cdot l\urcorner\geq \ulcorner r_{n-1}\cdot l_n\urcorner$ and $f(2^{-l})>2^{-l\cdot r_n}$. Let $r_n^*=\frac{\ulcorner r_n\cdot l_{n+1}\urcorner}{l_{n+1}}$. Then $r_n^*$ decreases, $r_n^*\rightarrow s$ and $f(2^{-l_{n+1}})>2^{-l_{n+1}\cdot r_n^*}$. Let $r^*_{-1}=1$.

We build a perfect tree $T$ so that the ratio of splits and non-splits below $l_{n+1}$ is $r_n^*$, and always splits at the lowest level possible.
$$T:=\left\{\sigma\in 2^{<\omega}: \forall n\forall l\ [r_n^*\cdot l_{n+1}+(1-r_{n-1}^*)\cdot l_n\leq l<l_{n+1}\rightarrow \sigma(l)=0] \right\}$$

 
\tikzset{
pattern size/.store in=\mcSize, 
pattern size = 5pt,
pattern thickness/.store in=\mcThickness, 
pattern thickness = 0.3pt,
pattern radius/.store in=\mcRadius, 
pattern radius = 1pt}
\makeatletter
\pgfutil@ifundefined{pgf@pattern@name@_5konwy22c}{
\pgfdeclarepatternformonly[\mcThickness,\mcSize]{_5konwy22c}
{\pgfqpoint{0pt}{0pt}}
{\pgfpoint{\mcSize+\mcThickness}{\mcSize+\mcThickness}}
{\pgfpoint{\mcSize}{\mcSize}}
{
\pgfsetcolor{\tikz@pattern@color}
\pgfsetlinewidth{\mcThickness}
\pgfpathmoveto{\pgfqpoint{0pt}{0pt}}
\pgfpathlineto{\pgfpoint{\mcSize+\mcThickness}{\mcSize+\mcThickness}}
\pgfusepath{stroke}
}}
\makeatother

 
\tikzset{
pattern size/.store in=\mcSize, 
pattern size = 5pt,
pattern thickness/.store in=\mcThickness, 
pattern thickness = 0.3pt,
pattern radius/.store in=\mcRadius, 
pattern radius = 1pt}
\makeatletter
\pgfutil@ifundefined{pgf@pattern@name@_ubtt6hapv}{
\pgfdeclarepatternformonly[\mcThickness,\mcSize]{_ubtt6hapv}
{\pgfqpoint{0pt}{0pt}}
{\pgfpoint{\mcSize+\mcThickness}{\mcSize+\mcThickness}}
{\pgfpoint{\mcSize}{\mcSize}}
{
\pgfsetcolor{\tikz@pattern@color}
\pgfsetlinewidth{\mcThickness}
\pgfpathmoveto{\pgfqpoint{0pt}{0pt}}
\pgfpathlineto{\pgfpoint{\mcSize+\mcThickness}{\mcSize+\mcThickness}}
\pgfusepath{stroke}
}}
\makeatother
\tikzset{every picture/.style={line width=0.75pt}} 

\begin{figure}[H]
    \centering
\begin{tikzpicture}[x=0.75pt,y=0.75pt,yscale=-1,xscale=1]

\usetikzlibrary{patterns}
\draw  [color={rgb, 255:red, 0; green, 0; blue, 0 }  ,draw opacity=1 ][pattern=_5konwy22c,pattern size=6pt,pattern thickness=0.75pt,pattern radius=0pt, pattern color={rgb, 255:red, 0; green, 0; blue, 0}] (284,223) -- (248,183) -- (318,183) -- cycle ;
\draw   (248,143) -- (318,143) -- (318,183) -- (248,183) -- cycle ;
\draw  [pattern=_ubtt6hapv,pattern size=6pt,pattern thickness=0.75pt,pattern radius=0pt, pattern color={rgb, 255:red, 0; green, 0; blue, 0}] (329,103) -- (317,143) -- (248,143) -- (236,103) -- cycle ;
\draw   (236,63) -- (329,63) -- (329,103) -- (236,103) -- cycle ;

\draw (198,210.4) node [anchor=north west][inner sep=0.75pt]    {$l_{0}$};
\draw (198,132.4) node [anchor=north west][inner sep=0.75pt]    {$l_{1}$};
\draw (199,52.4) node [anchor=north west][inner sep=0.75pt]    {$l_{2}$};
\draw (185,175.4) node [anchor=north west][inner sep=0.75pt]    {$r_{0}^{*} \cdot l_{1}$};

\end{tikzpicture}

    \caption{Tree $T$}
    \label{fig:tree}
\end{figure}

\begin{claim}
\begin{enumerate}
    \item For any $x\in [T]$, $dim(x)\leq s$.
    \item \footnote{Ludwig Staiger's paper\cite{staiger2017exact} section 4.1 contains a similar construction and proves a similar version of this claim with $A=2^\omega$.}Let $\delta:2^\omega\rightarrow [T]$ be the canonical homeomorphism, for any $A\subseteq 2^\omega$, 
    $$\mu(A)>0\Rightarrow H^f(\delta(A))>0.$$
    \item For any $x\in 2^\omega$, $dim^T(\delta(x))=s\cdot dim^T(x)$.
\end{enumerate}
\end{claim}

\begin{proof}[Proof of the Claim.]
\begin{enumerate}
    \item To record information for $x\upharpoonright l_{n+1}$, we only need to know $r_n^*\cdot l_{n+1}$ many bits (other bits are $0$), and on which levels the tree starts/stops splitting. More precisely, we need to know $r_k^*\cdot l_{k+1}+(1-r_{k-1}^*)\cdot l_k$ and $l_{k+1}$, for $k=0,1,...,n$. These are $2(n+1)$ natural numbers, each of them bounded by $l_{n+1}$. Let $\{\sigma_i\}_{i<2(n+1)}$ be the binary representation of this sequence of natural numbers. $0^{|\sigma_0|-1}1\sigma_0 0^{|\sigma_1|-1}1\sigma_1...$ will code this sequence of natural numbers.
    $$dim(x)\leq \liminf_{n\rightarrow\infty}\frac{C(x\upharpoonright l_{n+1})}{l_{n+1}}\leq \liminf_{n\rightarrow\infty}\frac{r_n^*\cdot l_{n+1}+4(n+1)\cdot \log l_{n+1}+D}{l_{n+1}}.$$
    Since $l_{n+1}>2^n$, $n<\log l_{n+1}$,
    $$dim(x)\leq \liminf_{n\rightarrow\infty}\left[r_n^*+\frac{4(\log l_{n+1}+1)\cdot \log l_{n+1}+D}{l_{n+1}}\right]=s.$$
    \item Let $k_n$ be the number of levels below $n$ where $T$ splits. More precisely, $k_n$ the number of $k<n$ such that there are $\sigma_0,\sigma_1\in T$ with $\sigma_0(k)=0$, $\sigma_1(k)=1$. For $\sigma_i\in T$, let $\tau_i$ be the unique string such that $\delta([\tau_i])=[\sigma_i]$.
    $$H^f(\delta(A))=\liminf_{n\rightarrow\infty}\left\{\sum_i f(2^{-|\sigma_i|}):\bigcup_i [\sigma_i]\supseteq\delta(A)\wedge |\sigma_i|\geq n\right\}$$
    $$\geq\liminf_{n\rightarrow\infty}\left\{\sum_i 2^{-|\tau_i|}:\bigcup_i[\tau_i]\supseteq A\wedge |\tau_i|\geq k_n\right\}.$$
    Here $\geq$ follows from the fact $f(2^{-l_{n+1}})>2^{-l_{n+1}\cdot r_n^*}$ and $f(x)/x$ is monotonically decreasing. In particular, If $\sigma_i$ is on the level the tree splits, let $n$ be the largest such that $l_{n+1}\leq |\sigma_i|$, 
    $$\frac{f(2^{-|\sigma_i|})}{2^{-|\sigma_i|}}\geq \frac{f(2^{-l_{n+1}})}{2^{-l_{n+1}}},$$
    so 
    $$f(2^{-|\sigma_i|})\geq f(2^{-l_{n+1}})\cdot 2^{-(|\sigma_i|-l_{n+1})}>2^{-(l_{n+1}\cdot r_n^*+|\sigma_i|-l_{n+1})}=2^{-|\tau_i|}.$$ 
    If $\sigma_i$ is on the level the tree does not split, let $n$ be the smallest such that $l_{n+1}\geq |\sigma_i|$, then $f(2^{-|\sigma_i|})\geq f(2^{-l_{n+1}})>2^{-l_{n+1}\cdot r_n^*}=2^{-|\tau_i|}$.
    \item We know
    $$C^T(\delta(x)\upharpoonright n)\leq C^T(x\upharpoonright k_n)+\log n+D,$$
    $$C^T(x\upharpoonright k_n)\leq C^T(\delta(x)\upharpoonright n)+D.$$
    So
    $$dim^T(\delta(x))=\liminf_{n\rightarrow\infty}\frac{C^T(\delta(x)\upharpoonright n)}{n}\leq\liminf_{n\rightarrow \infty}\frac{C^T(x\upharpoonright k_n)}{k_n}\cdot \frac{k_n}{n}+\frac{\log n}{n}+\frac{D}{n}=s\cdot dim^T(x).$$
    If $s\not= 0$, then
    $$dim^T(x)=\liminf_{n\rightarrow\infty}\frac{C^T(x\upharpoonright k_n)}{k_n}\leq \liminf_{n\rightarrow\infty}\frac{C^T(\delta(x)\upharpoonright n)}{k_n}+\frac{D}{k_n}=dim^T(\delta(x))\cdot \frac{1}{s}.$$
\end{enumerate}
\end{proof}

\begin{remark}
$[T]$ in the construction has effective dimension $s$.
\end{remark}

Let RAND be the set of $1$-random relative to $T$ reals, then $H^f(\delta(\text{RAND}))>0$. For any $x\in\delta(\text{RAND})$, $dim(x)\leq s$ and $dim(x)\geq dim^T(x)=s$. So $\delta(\text{RAND})\subseteq\mathcal{D}_s$, and $H^f(\mathcal{D}_s)>0$.
\end{proof}

The theorem only deals with $0\leq s<1$. For $s=1$, the Lebesgue measure $\mu(\mathcal{D}_1)=1$ (resp. $\mathcal{D}_{\leq 1}$). For any $A$ with $\mu(A)>0$, and any gauge function $f$,
$$H^f(A)>0\text{ if and only if }\liminf_{x\rightarrow 0+}x^{-1}f(x)>0.$$
Note that $x^{-1}f(x)$ decreases monotonically implies $\liminf_{x\rightarrow 0+}x^{-1}f(x)>0$.

The tree constructed in the proof gives an example of a closed set with positive measure under some dimension (or gauge function). There is previous work with similar constructions, for example \cite{staiger2020incomputability}.

\vspace{3mm}

Now we turn to the value of $H^f(\mathcal{D}_s)$.
\begin{corollary}
Suppose $x^{-1}f(x)$ decreases monotonically, and $0\leq s<1$. If $H^f(\mathcal{D}_s)>0$, $H^f(\mathcal{D}_s)$ is not $\sigma$-finite (resp. $\mathcal{D}_{\leq s}$).
\end{corollary}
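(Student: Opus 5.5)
We prove that $\mathcal{D}_s$ is not $\sigma$-finite for $H^f$, meaning it is not a countable union of sets of finite $H^f$-measure. The plan is to reuse the construction in the proof of Theorem \ref{gauge_dim}, but exploit the freedom in Claim (2): $\delta$ carries \emph{every} set of positive Lebesgue measure to a set of positive $H^f$-measure. We also use the scaling behaviour of $f$ itself.

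Suppose toward a contradiction that $\mathcal{D}_s=\bigcup_m A_m$ with $H^f(A_m)<\infty$ for every $m$. It suffices to produce a gauge $g$ with $g=o(f)$ near $0$ that still satisfies hypothesis 3 of Theorem \ref{gauge_dim} and has $g(x)/x$ monotonically decreasing. The theorem then gives $H^g(\mathcal{D}_s)>0$. On the other hand, for any $A$ with $H^f(A)<\infty$ and $g=o(f)$, the standard comparison of the covering sums shows $H^g(A)=0$. Hence $H^g(\mathcal{D}_s)\le\sum_m H^g(A_m)=0$, a contradiction. The same argument works verbatim for $\mathcal{D}_{\leq s}$, since hypothesis 3 and the conclusion are identical for it.

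The main obstacle is constructing $g$. We first try $g(x)=f(x)\cdot h(x)$, where $h$ is a slowly vanishing, non-decreasing function with $h(x)\to 0$ as $x\to0^+$ (for instance $h(x)=1/\log\log(1/x)$ near $0$, extended suitably). Three properties must be checked.

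\emph{Gauge and $g=o(f)$.} These are immediate from the choice of $h$.

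\emph{Hypothesis 3.} Given $r\in(s,1]$, pick $r'\in(s,r)$. By hypothesis 3 for $f$, there are arbitrarily small $x$ with $f(x)>x^{r'}$. At such $x$ we have $g(x)>x^{r'}h(x)\geq x^{r}$ once $x$ is small, because $h(x)\geq x^{r-r'}$ eventually for this $h$. So $g\not\leq^* x^r$.

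\emph{Monotonicity of $g(x)/x$.} This is the delicate part: $x^{-1}f(x)$ is decreasing but $h$ is increasing, so their product need not be monotone. We fix this by replacing $g$ with its least-decreasing-ratio regularization, $\tilde g(x)=x\cdot\sup_{y\geq x}g(y)/y$. Then $\tilde g(x)/x$ is decreasing by construction. Since $g(y)/y\leq f(y)/y\leq f(x)/x$ for $y\ge x$, we get $g\leq\tilde g\leq f$, and hypothesis 3 transfers to $\tilde g$ because $\tilde g\geq g$.

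The remaining check is $\tilde g=o(f)$. Write $\phi(y)=f(y)/y$, which is decreasing. We need $\sup_{y\geq x}\phi(y)h(y)=o(\phi(x))$. We split the supremum at $y=\sqrt{x}$:
\begin{itemize}
\item for $y\in[x,\sqrt x]$, $\phi(y)h(y)\le \phi(x)h(\sqrt x)$, and $h(\sqrt x)\to0$;
\item for $y>\sqrt x$, $\phi(y)h(y)\le \phi(\sqrt x)\,h(1)$.
\end{itemize}
The second range is the harder one. When $\phi(x)\to\infty$, the bound $\phi(\sqrt x)=o(\phi(x))$ may fail for very slowly growing $\phi$. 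In that case we instead choose the split point $y_0(x)$ adaptively, so that $\phi(y_0)\le\sqrt{\phi(x)}$ while $h(y_0)\to0$; this is possible because $\phi(x)\to\infty$.

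If $\phi$ stays bounded, then $f\asymp x$. This forces $s$ to be such that hypothesis 3 fails unless $s$ is large, and in fact for $s<1$ hypothesis 3 fails when $f\le Cx$. So $\phi\to\infty$ whenever the hypothesis of the corollary holds, and the adaptive split always applies. This completes the argument.
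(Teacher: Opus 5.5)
Your argument is correct and uses the same reduction as the paper. The paper also finds a gauge $g$ with $g(x)/f(x)\to 0$, $g(x)/x$ monotonically decreasing and hypothesis 3 preserved, so Theorem \ref{gauge_dim} gives $H^g(\mathcal{D}_s)>0$, which is incompatible with $\mathcal{D}_s$ being $\sigma$-finite for $H^f$; the paper merely asserts that such a $g$ exists, and your construction $\tilde g(x)=x\sup_{y\ge x}f(y)h(y)/y$ supplies that step, with $\tilde g=o(f)$ coming from the adaptive split once hypothesis 3 and $s<1$ force $f(x)/x\to\infty$.
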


\begin{proof}
This follows from the fact that if $x^{-1}f(x)$ is decreasing monotonically and for any $r\in(s,1]$, $f$ is not dominated by $x^r$, then there exists $g\succ f$ such that $x^{-1}g(x)$ is decreasing monotonically and for any $r\in(s,1]$, $g$ is not dominated by $x^r$.
\end{proof}

\begin{corollary}
For $0<s<1$, $\mathcal{D}_{<s}:=\mathcal{D}_{\leq s}-\mathcal{D}_s$, if $x^{-1}f(x)$ decreases monotonically,
\begin{center}
$H^f(\mathcal{D}_{<s})=0\text{ if and only if }\forall r\in (0,s)$ $f\leq^* x^r$.    
\end{center}
Moreover, $\mathcal{D}_{<s}$ has dimension $s$ and $H^s(\mathcal{D}_{<s})=0$
\end{corollary}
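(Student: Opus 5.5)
We reduce both parts of the statement to Theorem \ref{gauge_dim}, using that $\mathcal{D}_{<s}$ is a countable union of the sets $\mathcal{D}_{\leq t}$ with $t<s$.

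\textbf{Decomposition.} Fix rationals $t_m\nearrow s$. Then
$$\mathcal{D}_{<s}=\bigcup_m \mathcal{D}_{\leq t_m},$$
since $dim(x)<s$ implies $dim(x)\leq t_m$ for some $m$. By countable subadditivity of $H^f$, we get $H^f(\mathcal{D}_{<s})=0$ if and only if $H^f(\mathcal{D}_{\leq t_m})=0$ for every $m$.

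\textbf{Applying Theorem \ref{gauge_dim} to each piece.} Each $t_m<1$, so Theorem \ref{gauge_dim} applies. It gives $H^f(\mathcal{D}_{\leq t_m})=0$ if and only if there is some $r\in(t_m,1]$ with $f\leq^* x^r$. Since $x^r\leq x^{r'}$ on $(0,1)$ whenever $r\geq r'$, domination by $x^r$ implies domination by $x^{r'}$ for every $r'\leq r$. So the set of $r$ with $f\leq^* x^r$ is a down-closed subset of $(0,1]$.

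\textbf{The forward direction.} Assume $H^f(\mathcal{D}_{<s})=0$. For every $m$ there is some $r>t_m$ with $f\leq^* x^r$. By down-closure, $f\leq^* x^{r'}$ for every $r'<s$: pick $m$ with $t_m>r'$, and then the witnessing $r>t_m>r'$ dominates.

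\textbf{The converse.} Assume $f\leq^* x^r$ for all $r\in(0,s)$. Given $m$, pick $r\in(t_m,s)$; this interval is nonempty because $t_m<s$. Then $f\leq^* x^r$ with $r\in(t_m,1]$, so $H^f(\mathcal{D}_{\leq t_m})=0$ for every $m$. Hence $H^f(\mathcal{D}_{<s})=0$.

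\textbf{The "moreover" clause.} Take $f=x^s$, which satisfies the hypothesis because $x^{-1}x^s=x^{s-1}$ is decreasing for $s<1$. Since $x^s\leq x^r$ for $r<s$ and small $x$, the equivalence just proved gives $H^s(\mathcal{D}_{<s})=0$. This also shows $dim_H(\mathcal{D}_{<s})\leq s$.

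For the lower bound, note that $\mathcal{D}_{<s}\supseteq\mathcal{D}_{t}$ for every $t<s$. By the Cai–Hartmanis–Reimann theorem, $dim_H(\mathcal{D}_t)=t$, so $dim_H(\mathcal{D}_{<s})\geq\sup_{t<s}t=s$. Alternatively, for $t<s$, apply the equivalence to $f=x^t$: it is not dominated by $x^{r}$ for $r\in(t,s)$, so $H^t(\mathcal{D}_{<s})>0$.

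\textbf{Main obstacle.} There is essentially none: the only care needed is the quantifier bookkeeping between "for some $r>t_m$" and "for all $r<s$", which the down-closure and the strict inequality $t_m<s$ handle. The hypothesis $0<s$ is used only so that $(0,s)$ is nonempty and the sequence $t_m$ exists. One should also observe that the argument never needs $\liminf$ conditions on $f$ beyond those already required by Theorem \ref{gauge_dim}.
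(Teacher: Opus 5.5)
Your proof is correct and follows the same route as the paper. You decompose $\mathcal{D}_{<s}$ into countably many sets $\mathcal{D}_{\leq t}$ with $t<s$, apply Theorem~\ref{gauge_dim} to each piece, and convert ``$\exists k\in(t,1]$ with $f\leq^* x^k$'' into ``$f\leq^* x^r$ for all $r<s$'' using monotonicity of $x^r$ in $r$. Your union $\bigcup_m\mathcal{D}_{\leq t_m}$ is the correct form of the paper's displayed decomposition, which has a typo. You also spell out the ``moreover'' clause, via $f=x^s$ and the Cai--Hartmanis--Reimann theorem, which the paper leaves implicit.
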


\begin{proof}
$\mathcal{D}_{<s}=\bigcup_{r\in \mathbb{Q}\cap (0,s)}\mathcal{D}_s.$
\begin{align*}
  H^f(\mathcal{D}_{<s})=0 & \Leftrightarrow\forall r\in (0,s)\ H^f(\mathcal{D}_r)=0\\
  & \Leftrightarrow\forall r\in (0,s)\exists k\in (r,1]\ f(x)\leq^* x^k\\
  & \Leftrightarrow\forall r\in (0,s)\ f(x)\leq^* x^r.
\end{align*}
\end{proof}

\section{A Comparison with Diophantine Approximation}

\subsection{From \texorpdfstring{$2^\omega$}{2omega} to \texorpdfstring{$[0,1]$}{[0,1]}}

For the definition of $H^f$ in $\mathbb{R}^n$, see Rogers' book \cite{alma991043247829706532}. We provide a discussion here that extends our result to $[0,1]$.

Let $\pi:2^\omega\setminus \{x:x\text{ is eventually }1\}\rightarrow [0,1]$ be
$$\pi(x)=\sum_{i\in x}2^{-i}.$$
$\pi$ is a bijection.

\begin{definition}
An interval $I$ is called dyadic if it is of the form $[i\cdot 2^{-n}, (i+1)\cdot 2^{-n})$ for any $i,n\in\mathbb{N}$ and $I\subseteq [0,1]$.
\end{definition}

\begin{proposition}
Let $f$ be a gauge function, $A\subseteq 2^\omega$, then
$$H^f(A)=H^f(\pi(A)).$$
\end{proposition}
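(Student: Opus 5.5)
The plan is to read the Hausdorff measure on $[0,1]$ in the form suggested by the definition of dyadic intervals given just above: covers of $\pi(A)$ by dyadic intervals $I_i$ with $|I_i|\le 2^{-n}$, with cost $\sum_i f(|I_i|)$. This is the standard net-measure presentation of $H^f$ (see Rogers' book). With this reading, the proposition becomes a statement that $\pi$ induces a cost-preserving correspondence between cylinder covers in $2^\omega$ and dyadic-interval covers in $[0,1]$. I would then prove both inequalities by transporting covers in each direction without any loss.

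The key observation is the following. For $\sigma\in 2^{<\omega}$ with $|\sigma|=n$, write $I_\sigma$ for the dyadic interval $[i\cdot 2^{-n},(i+1)\cdot 2^{-n})$, where $i$ is the integer whose $n$-digit binary expansion is $\sigma$; with the paper's indexing of $\pi$, that means $i=\sum_{j<n}\sigma(j)2^{n-1-j}$. Then the map $\sigma\mapsto I_\sigma$ is a bijection from $2^{<\omega}$ onto the dyadic intervals, it preserves lengths ($|I_\sigma|=2^{-|\sigma|}$), and
$$\pi\bigl([\sigma]\setminus\{x:x\text{ eventually }1\}\bigr)=I_\sigma.$$
The half-open convention $[\,\cdot\,,\,\cdot\,)$ is exactly what makes this identity hold: the right endpoint $(i+1)2^{-n}$ would only be hit by the excluded sequence $\sigma 1^\omega$.

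For $H^f(\pi(A))\le H^f(A)$, I take any cover $\{[\sigma_i]\}$ of $A$ with $|\sigma_i|\ge n$. Then $\{I_{\sigma_i}\}$ covers $\pi(A)$, because $\pi$ is only applied to points of $A$ in its domain. Each $I_{\sigma_i}$ has length $\le 2^{-n}$, and the cost $\sum_i f(2^{-|\sigma_i|})$ is unchanged. Taking the infimum and then letting $n\to\infty$ gives the inequality. For $H^f(A)\le H^f(\pi(A))$, I take a dyadic cover $\{I_i\}$ of $\pi(A)$ with lengths $\le 2^{-n}$ and write $I_i=I_{\sigma_i}$. Then $\{[\sigma_i]\}$ covers $A\cap\operatorname{dom}\pi$, since $\pi$ is a bijection and $\pi(x)\in I_{\sigma}$ iff $\sigma\prec x$ for $x$ in the domain of $\pi$; the cost is again identical. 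The countably many eventually-$1$ points of $A$ must be handled separately. I would cover the $k$-th such point by a single cylinder of length $m_k\ge n$, chosen so that $f(2^{-m_k})<\varepsilon 2^{-k}$; this is possible because $f(x)\to 0$ as $x\to 0^+$. This adds at most $\varepsilon$ to the cost, and letting $\varepsilon\to 0$ finishes the second inequality.

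I expect the main obstacle to be the interpretation of $H^f$ on $[0,1]$. If Rogers' definition with covers by arbitrary sets of small diameter is intended, a set of diameter $d$ may straddle a dyadic boundary. The natural replacement then uses up to two dyadic intervals of length about $2d$, which only gives $H^f(A)\le c\,H^f(\pi(A))$ with a constant depending on the doubling behaviour of $f$, not the exact equality stated. So I would first fix the dyadic (net) definition as the meaning of $H^f$ on $[0,1]$, which is what the preceding definition of dyadic intervals is set up for. Under that definition the two transfer arguments above give equality exactly. The remaining care points are the bookkeeping for the excluded eventually-$1$ sequences and the half-open endpoints.
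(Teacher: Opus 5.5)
Your proposal is correct and is essentially the paper's argument. You discard the eventually-$1$ sequences, describe $H^f$ on $[0,1]$ through covers by dyadic intervals, and use that $\pi$ maps $[\sigma]$ (minus eventually-$1$ points) exactly onto the dyadic interval of length $2^{-|\sigma|}$, so covers and their costs correspond one-to-one.

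You are more careful than the paper on two points it only asserts. First, you show why the countably many eventually-$1$ points contribute nothing. Second, you note that the dyadic description of $H^f$ on $[0,1]$ cited from Rogers is in general only comparable to the diameter-based measure (within a bounded factor when $x^{-1}f(x)$ is decreasing, since then $f(2d)\le 2f(d)$). So the exact equality is really a statement about the dyadic net measure.
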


\begin{proof}
Since $A$ and $A\setminus \{x:x\text{ is eventually }1\}$ have the same $H^f$ measure, we can assume $A\subseteq 2^\omega\setminus \{x:x\text{ is eventually }1\}$.\\
$$H^f(A)=\lim_{n\rightarrow \infty}\inf\left\{\sum_i f(2^{-|\sigma_i|}):\bigcup_i [\sigma_i]\setminus \{x:x\text{ is eventually }1\}\supseteq A\text{ and }\forall i\ (|\sigma_i|\geq n)\right\}.$$
For any $B\subseteq [0,1]$,
$$H^f(B)=\lim_{n\rightarrow \infty}\inf\left\{\sum_i f(|I_i|):I_i\text{'s are dyadic intervals, }|I_i|\leq 2^{-n},\ \bigcup_i I_i\supseteq B\right\}.$$
(See \cite{alma991043247829706532} for covering using dyadic intervals.)

\vspace{0.3cm}

For any $\sigma\in 2^{<\omega}$, $\pi([\sigma])=[\sum_{i\in\sigma}2^{-i},2^{-|\sigma|}+\sum_{i\in\sigma}2^{-i})$. So 
$$H^f(A)=H^f(\pi(A)).$$
\end{proof}

Since the effective dimension of $x\in [0,1]$ is defined as the effective dimension of its binary representation, working in $2^\omega$ and $[0,1]$ are the same.

\subsection{A Comparison with Diophantine Approximation}
Diophantine approximation is closely related to effective dimensions. In particular, in the definition of $W(s)$, $|x-\frac{p}{q}|<q^{-s}$ gives us an algorithm to approximate $x$ using a rational number, with an error of $q^{-s}$.

Jarn\'{i}k \cite{jarnik1999diophantische} and independently Besicovitch \cite{besicovitch1934sets} proves that the Hausdorff dimension of $W(s)$ is $2/s$. Jarn\'{i}k obtains the gauge profile of $W(s)$ as well \cite{jarnik1931simultanen}.

\begin{theorem}[Jarn\'{i}k\cite{jarnik1931simultanen}]
For any gauge function $f$ such that $\lim_{x\rightarrow 0+}x^{-1}f(x)=\infty$ and $x^{-1}f(x)$ decreases monotonically,
$$H^f(W(s))=0\text{ if and only if }\sum_{q=1}^\infty qf(q^{-s})<\infty.$$
\end{theorem}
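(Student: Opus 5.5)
The natural approach is to treat $W(s)$ as a $\limsup$ set of the intervals $B(p/q,q^{-s})$ with $0\le p\le q$. Then prove the two directions separately: the convergence half by the standard covering (Hausdorff--Cantelli) argument, and the divergence half by transferring a Lebesgue-measure statement (Khintchine's theorem) to $H^f$ via the mass transference principle of Beresnevich and Velani \cite{beresnevich2006mass}. Throughout I use the hypothesis that $x^{-1}f(x)$ is decreasing in its doubling form, $f(2x)\le 2f(x)$, which follows from $f(2x)/(2x)\le f(x)/x$.

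\emph{Convergence $\Rightarrow H^f(W(s))=0$.} For every $N$, the family $\{B(p/q,q^{-s}): q\ge N,\ 0\le p\le q\}$ covers $W(s)$. All its members have diameter $2q^{-s}\le 2N^{-s}\to 0$. Its $f$-cost is at most
\[
\sum_{q\ge N}(q+1)f(2q^{-s})\le 4\sum_{q\ge N} q f(q^{-s}),
\]
which is the tail of a convergent series and therefore tends to $0$. Hence $H^f(W(s))=0$. If one insists on dyadic covers as in the Proposition above, each interval is covered by at most three dyadic intervals of comparable length, and doubling absorbs the constant.

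\emph{Divergence $\Rightarrow H^f(W(s))>0$.} Put $\psi(q)=q f(q^{-s})$, so that $f(q^{-s})=\psi(q)/q$ and $\sum_q\psi(q)=\infty$. The mass transference principle states the following. Suppose the $\limsup$ of the enlarged intervals $B(p/q,f(q^{-s}))$ has full Lebesgue measure in $[0,1]$. Then for every ball $B$, $H^f(B\cap\limsup_q B(p/q,q^{-s}))=H^f(B)$. The hypothesis $x^{-1}f(x)\to\infty$ is exactly what the principle requires ($f(x)/x$ monotone with $f(x)/x\to\infty$). It also gives $H^f(B)=\infty$ for any nondegenerate interval, so we would even get $H^f(W(s))=\infty$.

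It therefore remains to show that almost every $x$ satisfies $|x-p/q|<\psi(q)/q$ for infinitely many $q$. This is Khintchine's theorem applied to $\psi$, since $\sum\psi(q)=\infty$. If $\psi(q)/q\not\to 0$, the statement is trivial.

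The main obstacle is the monotonicity of $\psi$ that Khintchine's theorem requires. The hypotheses give $f(q^{-s})\ge q^{-s}\cdot\frac{f((q+1)^{-s})}{(q+1)^{-s}}$, so $\psi$ is at worst decreasing up to bounded factors. However, $q f(q^{-s})$ need not be genuinely nonincreasing. My first attempt would be to replace $\psi$ by a nonincreasing comparable function, or to pass to a dyadic blocking $q\in[2^k,2^{k+1})$ on which $\psi$ is comparable to a constant $\psi_k$ with $\sum_k 2^k\psi_k=\infty$. On each block I would run the quasi-independence (Cassels/Gallagher zero-one) argument directly. Failing that, I would invoke the Duffin--Schaeffer theorem of Koukoulopoulos--Maynard restricted to reduced fractions; its divergence condition $\sum\varphi(q)\psi(q)/q=\infty$ follows from $\sum\psi(q)=\infty$ by the same blocking, since $\varphi(q)/q$ has positive average on dyadic blocks.
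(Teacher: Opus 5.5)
The paper does not prove this statement. It is quoted from Jarn\'{i}k, and the only related hint is a passing remark that the mass transference principle (MTP) applies to Theorem~\ref{gauge_dim}. Your argument is the Beresnevich--Velani route: a covering argument for convergence, and MTP plus Khintchine for divergence. It is correct. Compared with a direct Cantor-type construction in the style of the classical proofs (and of the tree $T$ in Theorem~\ref{gauge_dim}), it is much shorter. It even gives $H^f(W(s))=\infty$ under divergence. The price is that it rests on two substantial black boxes.

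Some points in your divergence step need correcting, though none is load-bearing.

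\begin{itemize}
\item The monotonicity obstacle is self-inflicted. You need the lim sup of $B(p/q,\Psi(q))$ with radius $\Psi(q)=f(q^{-s})$, and $\Psi$ is non-increasing simply because $f$ is non-decreasing. Khintchine's theorem in this radius form needs only $\Psi$ monotone. Abel summation with $\sum_{q\le n}\varphi(q)\gg n^2$ gives $\sum_{q\le N}\varphi(q)\Psi(q)\gg\sum_{q\le N}q\Psi(q)$. That is exactly the side condition of the 1941 Duffin--Schaeffer theorem, so Koukoulopoulos--Maynard is unnecessary.
\item The inequality you wrote is reversed. Since $x^{-1}f(x)$ decreases and $q^{-s}>(q+1)^{-s}$, one gets $f(q^{-s})\le q^{-s}f((q+1)^{-s})/(q+1)^{-s}$. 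Moreover, $\psi(q)=qf(q^{-s})$ need not be even quasi-non-increasing: for $s=2$ and $f(x)=x^{1/3}$ one has $\psi(q)=q^{1/3}$. What is true, and all your blocking fallback uses, is that $\psi$ varies by at most a factor $2^{1+s}$ on each block $[2^k,2^{k+1})$. The upper bound comes from $f$ being non-decreasing, and the lower bound from $f(2^{-(k+1)s})\ge 2^{-s}f(2^{-ks})$. With that, your reduction to $\sum\varphi(q)\psi(q)/q=\infty$ is valid.
\item The MTP as stated in $\mathbb{R}$ requires full Lebesgue measure in every ball and radii tending to $0$. Either take all $p\in\mathbb{Z}$ and use $1$-periodicity on a bounded window, or apply the version of the MTP on $\Omega=[0,1]$.
\item The MTP itself only needs $x^{-1}f(x)$ monotone. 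The hypothesis $x^{-1}f(x)\to\infty$ is what gives $H^f(B)=\infty$.
\end{itemize}
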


Calude and Staiger showed in $\cite{calude2013liouville}$ that $W(2/s)\subseteq\mathcal{D}_{\leq s}$. We present a sketch of their proof here as this indicates how closely related these two sets are.

\begin{theorem}[Calude, Staiger\cite{calude2013liouville}]
For $0<s\leq 1$, $W(2/s)\subseteq\mathcal{D}_{\leq s}$.
\end{theorem}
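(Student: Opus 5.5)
The idea is to show that if $x\in W(2/s)$, then infinitely many of its initial segments can be described by roughly $s$ times their length, using the good rational approximations $p/q$ as the descriptions. Concretely, fix $x\in W(2/s)$. There are infinitely many pairs $(p,q)$ with $q>0$ and $|x-p/q|<q^{-2/s}$. Since $x\in[0,1]$, we may assume $0\le p\le q$, so $p/q\in[0,1]$ (or at worst $p$ differs from this range by $O(1)$). For each such pair, let $n=\lfloor (2/s)\log q\rfloor$, so that $q^{-2/s}\le 2^{-n}$. The rational $p/q$ is then within $2^{-n}$ of $x$. Because $x$ is irrational or has infinitely many $q$ (rationals can be handled separately, since they have dimension $0$), $q\to\infty$ along these pairs, and hence $n\to\infty$.

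The key step is to bound $C(x\upharpoonright n)$. Describe $x\upharpoonright n$ by the following data:
\begin{enumerate}
\item the pair $(p,q)$, coded self-delimitingly;
\item a constant number of extra bits.
\end{enumerate}
From $p/q$ and $n$ one computes the binary expansion of $p/q$ to $n$ bits. Since $|x-p/q|<2^{-n}$, the first $n$ bits of $x$ agree with one of at most $O(1)$ candidates: the $n$-bit truncations of $p/q-2^{-n}$, $p/q$, and $p/q+2^{-n}$. The extra bits select the correct candidate. The integer $n$ is recoverable from $q$ (given $s$, with care for noncomputable $s$, see below). This gives
$$C(x\upharpoonright n)\le 2\log q+O(\log\log q)+O(1).$$
To avoid a factor-of-two loss from coding $p$ and $q$ separately with prefix codes, code $q$ as $\log q$ bits, give the length of $p$ in $O(\log\log q)$ bits, and note $p\le q$. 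Now $2\log q\le s(n+1)$, so
$$\frac{C(x\upharpoonright n)}{n}\le s+O\!\left(\frac{\log n}{n}\right).$$
This holds along infinitely many $n$, so $\liminf_n C(x\upharpoonright n)/n\le s$, i.e.\ $dim(x)\le s$.

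The main obstacle is the bookkeeping in the description. First, $s$ may be noncomputable, so $n$ should not be computed from $q$. Instead, supply $n$ explicitly in $O(\log n)$ bits, which is negligible. Second, the boundary effect of dyadic rounding must be handled: $p/q$ and $x$ can lie on opposite sides of a dyadic point, so the $n$-bit prefixes may differ. This is resolved by the bounded list of candidates above. Third, the total overhead must stay $o(n)$. With these handled, the inequality above follows directly. If instead $x$ has a dyadic expansion ambiguity, note that $x$ is then rational and $dim(x)=0\le s$.
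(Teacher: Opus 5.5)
Your proposal is correct and follows essentially the same route as the paper. Both arguments use each good approximation $p/q$ to describe $x\upharpoonright n$ for $n\approx\frac{2}{s}\log q$, with at most three dyadic candidates, and both code $(p,q)$ in $\log p+\log q+O(\log\log q)$ bits by exploiting $p\lesssim q$, so that $C(x\upharpoonright n)/n\le s+o(1)$ along infinitely many $n$. Your choice to supply $n$ explicitly in $O(\log n)$ bits is a worthwhile refinement: the paper's sketch tacitly computes $k=\lfloor\frac{2}{s}\log q\rfloor$ from $q$, which is only possible when $s$ is computable.
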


\begin{proof}
If $x\in W(2/s)$, take $(p,q)$ such that $|x-\frac{p}{q}|<q^{-2/s}$. For $k\leq \frac{2}{s}\log q$, $(\frac{p}{q}-q^{-2/s},\frac{p}{q}+q^{-2/s})$ is contained in at most three dyadic intervals of length $2^{-k}$. So $(p,q)$ reveals the first $k$ digits, up to three possibilities. Let $k$ be the largest natural number such that $k\leq \frac{2}{s}\log q$. $(p,q,i)$, where $i=0,1,2$ depending on which of the three dyadic intervals $x$ is in, will code the first $k$ bits of $x$. We use the standard way to code $(p,q)$. See for example \cite{shen2015around} for more details. Let $\sigma,\tau,\gamma$ be the binary representation of $p,q,\log p$, and $\gamma_2$ be $\gamma$ with every bit doubled, a pair $(p,q)$ can be coded as $\gamma_20 1\sigma\tau$.
$$C(x\upharpoonright k)\leq \log p+\log q+2\log\log p+D\leq (\frac{2}{s}\log q)\cdot s+2\log\log q+D.$$
So
$$\liminf_{n\rightarrow\infty} C(x\upharpoonright n)/n\leq s.$$
\end{proof}

It is clear that if $\exists r\in (s,1]\ f(x)\leq^* x^r$, then $\sum_{q=1}^\infty qf(q^{-2/s})<\infty$.

\begin{proposition}
For $0<s<1$, there is a gauge function $f$ satisfying
\begin{enumerate}
    \item $\lim_{x\rightarrow 0+}x^{-1}f(x)=\infty$,
    \item $x^{-1}f(x)$ decreases monotonically,
    \item $\sum_{q=1}^\infty qf(q^{-2/s})<\infty$,
    \item $\forall r\in (s,1]\ f(x)\not\leq^* x^r$.
\end{enumerate}
\end{proposition}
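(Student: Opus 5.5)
Writing $x=q^{-2/s}$, condition 3 reads $\sum_q q f(q^{-2/s})<\infty$. A borderline choice like $f(x)=x^s\cdot g(x)$ with $g$ a small logarithmic correction is the natural candidate. Take
$$f(x)=x^{s}\,\bigl(\log(1/x)\bigr)^{-2}$$
for small $x$, extended suitably near larger $x$ (say on $(0,x_0]$ with $x_0<1$ small, then continued linearly or as a constant so that $f$ stays continuous, non-decreasing and $x^{-1}f(x)$ stays monotone). I will verify the four conditions in order; each reduces to an elementary calculus estimate.

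For conditions 1 and 2, note $x^{-1}f(x)=x^{s-1}(\log(1/x))^{-2}$. Since $s<1$, the factor $x^{s-1}$ blows up polynomially as $x\to0^+$ while $(\log(1/x))^{-2}\to0$ only logarithmically, so the limit is $\infty$. For monotonicity, differentiate $\phi(t)=-(1-s)t\cdot$ in the variable $t=\log(1/x)$: we get $x^{-1}f(x)=e^{(1-s)t}t^{-2}$, which is increasing in $t$ once $t>2/(1-s)$, i.e.\ decreasing in $x$ for $x<x_0:=e^{-2/(1-s)}$. Similarly $f$ itself, i.e.\ $e^{-st}t^{-2}$, is decreasing in $t$, so $f$ is non-decreasing in $x$ and tends to $0$. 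On $[x_0,\infty)$ I extend by $f(x)=f(x_0)\,x/x_0$ up to some point and then constant; linear keeps $x^{-1}f$ constant, constant makes it decreasing, so monotonicity of $x^{-1}f$ holds globally and $f$ is a gauge function. This extension is the only fiddly step but is routine. (Also $s=0$ is excluded by hypothesis $0<s$, though the formula would still work.)

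For condition 3, with $x=q^{-2/s}$ we get $x^s=q^{-2}$ and $\log(1/x)=\tfrac{2}{s}\log q$, so $qf(q^{-2/s})=\tfrac{s^2}{4}\,q^{-1}(\log q)^{-2}$ for large $q$, which is summable by the Cauchy condensation (or integral) test; finitely many early terms are irrelevant. For condition 4, given $r\in(s,1]$, we have $f(x)/x^r=x^{s-r}(\log(1/x))^{-2}\to\infty$ as $x\to0^+$ because the polynomial factor $x^{-(r-s)}$ dominates the logarithm, so $f(x)>x^r$ for all small $x$; in particular $f\not\leq^* x^r$. The main point of the construction, and the only place where care is needed, is choosing the correction exponent: it must be strong enough (power $>1$ of the logarithm) to make the Jarník series converge, yet subpolynomial so that $f$ beats every $x^r$ with $r>s$; exponent $2$ achieves both.
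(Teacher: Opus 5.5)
Your proof is correct, but it takes a different route from the paper's.

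The paper uses a diagonal, piecewise construction. With $r_n=s+1/n\downarrow s$, it picks increasing integers $p_n$ so that the tails satisfy $\sum_{q\ge p_n} q\,(q^{-2/s})^{r_n}\le 2^{-n}$, and it sets $f(x)=x^{r_n}$ on $[p_{n+1}^{-2/s},p_n^{-2/s})$. Condition 3 then holds block by block, and condition 4 holds because the exponents eventually fall below any $r>s$.

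You instead give a single closed-form function $f(x)=x^{s}(\log(1/x))^{-2}$, so each condition is a one-line calculus check. Your choice also makes explicit that only a logarithmic factor separates conditions 3 and 4. Its power must exceed $1$ so that $\sum q^{-1}(\log q)^{-2}$ converges, and it must be subpolynomial so that $f$ eventually dominates every $x^r$ with $r>s$.

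The paper's scheme is a generic diagonalization that would adapt to other series in place of $\sum q f(q^{-2/s})$. As literally written, however, its $f$ is not a gauge function. At each breakpoint $a=p_{n+1}^{-2/s}$ one has $\lim_{x\to a^-}f(x)=a^{r_{n+1}}>a^{r_n}=f(a)$, so $f$ jumps downward and is neither continuous nor non-decreasing. On the first blocks, where $r_n\ge 1$, the ratio $x^{-1}f(x)=x^{r_n-1}$ is not decreasing. Some smoothing and truncation is needed to repair this. Your explicit formula on $(0,x_0]$, with the linear-then-constant continuation beyond $x_0=e^{-2/(1-s)}$, avoids these issues entirely.

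One small point: the fragment ``differentiate $\phi(t)=-(1-s)t\cdot$'' is garbled, but the derivative computation that follows it is right.
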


\begin{proof}
Let $r_n=s+\frac{1}{n}$, $p_n=$ least $p>p_{n-1}$ such that $\sum_{q=p}^\infty q(q^{-2/s})^{r_n}\leq 2^{-n}$, and
$$f(x)=x^{r_n},\ x\in [p_{n+1}^{-2/s},p_n^{-2/s}).$$
\end{proof}

\begin{corollary}
\label{separate}
For $0<s<1$, there is a gauge function $f$ such that $H^f(W(2/s))=0$ and $H^f(\mathcal{D}_{\leq s})>0$.
\end{corollary}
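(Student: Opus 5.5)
The plan is to take the gauge function $f$ built in the preceding proposition and check that it works. That proposition gives, for $0<s<1$, a gauge $f$ with four properties:
\begin{enumerate}
    \item $\lim_{x\rightarrow 0+}x^{-1}f(x)=\infty$,
    \item $x^{-1}f(x)$ decreases monotonically,
    \item $\sum_{q}qf(q^{-2/s})<\infty$,
    \item $\forall r\in(s,1]\ f(x)\not\leq^* x^r$.
\end{enumerate}
Each half of the corollary is then read off from an earlier theorem.

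For $H^f(W(2/s))=0$, we apply Jarn\'{i}k's theorem with exponent $2/s$. Properties 1 and 2 are exactly its hypotheses on $f$. Property 3 is the convergence condition $\sum_q qf(q^{-2/s})<\infty$, so Jarn\'{i}k's theorem gives $H^f(W(2/s))=0$. Jarn\'{i}k's theorem is a statement about $W(2/s)\subseteq[0,1]$ with the usual Hausdorff measure on the reals, so this half is purely on the real side.

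For $H^f(\mathcal{D}_{\leq s})>0$, we apply Theorem \ref{gauge_dim}. Property 2 is its monotonicity hypothesis, $0<s<1$ lies in its range, and property 4 is its condition 3. The theorem therefore gives $H^f(\mathcal{D}_{\leq s})>0$ in $2^\omega$. The proposition identifying $H^f(A)$ with $H^f(\pi(A))$ transfers this to $[0,1]$, and effective dimension on $[0,1]$ is defined through binary expansions, so the same set is meant on both sides. The Calude--Staiger inclusion $W(2/s)\subseteq\mathcal{D}_{\leq s}$ is not needed, though it is what makes the separation meaningful.

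The main obstacle is confirming that the piecewise definition $f(x)=x^{r_n}$ on $[p_{n+1}^{-2/s},p_n^{-2/s})$ really has properties 1--4 as a gauge function.
\begin{itemize}
    \item As written, $f$ may be discontinuous at the breakpoints, and $x^{-1}f(x)$ need not be monotone across them. The fix I would try first is to redefine $f$ so that $\log(f(x)/x)$ is the piecewise-linear interpolation in $\log x$ between the breakpoint values. Since $r_n<1$ decreases, each piece $x^{r_n-1}$ already makes $x^{-1}f(x)$ decreasing, and the interpolation keeps the function continuous. If necessary I would also choose the $p_n$ more sparsely so that the interpolation stays within a factor $2$ of $x^{r_n}$ on most of each block.
    \item Property 1 holds because $x^{r_n-1}\to\infty$.
    \item Property 3 holds because the tail sums over each block are at most $2^{-n}$, up to a constant factor from the interpolation.
    \item Property 4 holds because for any $r>s$ we have $r_n<r$ eventually, so $f(x)=x^{r_n}>x^r$ at points arbitrarily close to $0$.
\end{itemize}
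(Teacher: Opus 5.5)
Your proposal is correct and is the argument the paper intends: take the $f$ of the preceding proposition, get $H^f(W(2/s))=0$ from Jarn\'{i}k's theorem via properties 1--3, and get $H^f(\mathcal{D}_{\leq s})>0$ from Theorem~\ref{gauge_dim} via properties 2 and 4, transferred to $[0,1]$ by $\pi$.

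Your worry about the piecewise $f$ is justified but slightly misdiagnosed. Across each breakpoint $x^{-1}f(x)$ already jumps in the right direction. What fails is that $f$ itself drops as $x$ increases through a breakpoint, so it is not a gauge function. Your interpolation of $\log(f(x)/x)$ through the nodes $(p_{n+1}^{-2/s},\,p_{n+1}^{-2r_n/s})$ fixes this, provided the $p_n$ are sparse enough that the new $f$ is non-decreasing. The new $f$ then lies below the old one, so property 3 is inherited with no constant loss. You should also start the blocks at some $n$ with $r_n<1$.
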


This gives us a separation of $W(2/s)$ and $\mathcal{D}_{\leq s}$ in Hausdorff measure, which cannot be done by Hausdorff dimension.

\bibliographystyle{plain}
\bibliography{ref}

@article{beresnevich2006mass,
  title={A mass transference principle and the Duffin-Schaeffer conjecture for Hausdorff measures},
  author={Beresnevich, Victor and Velani, Sanju},
  journal={Annals of mathematics},
  pages={971--992},
  year={2006},
  publisher={JSTOR}
}

@article{lutz2003dimensions,
  title={The dimensions of individual strings and sequences},
  author={Lutz, Jack H},
  journal={Information and Computation},
  volume={187},
  number={1},
  pages={49--79},
  year={2003},
  publisher={Elsevier}
}

@inproceedings{ya1984coding,
  title={Coding of combinatorial sources and Hausdorff dimension},
  author={Ya, Ryabko B},
  booktitle={Soviet Math. Doklady},
  volume={30},
  pages={219--222},
  year={1984}
}

@inproceedings{cai1994hausdorff,
  title={On Hausdorff and topological dimensions of the Kolmogorov complexity of the real line},
  author={Cai, Jin-Yi and Hartmanis, Juris},
  booktitle={Proceedings of the 30th IEEE symposium on Foundations of computer science},
  pages={605--619},
  year={1994}
}

@article{lutz2005effective,
  title={Effective fractal dimensions},
  author={Lutz, Jack H},
  journal={Mathematical Logic Quarterly},
  volume={51},
  number={1},
  pages={62--72},
  year={2005},
  publisher={Wiley Online Library}
}

@book{alma991043247829706532,
author = {Rogers, C. A.},
address = {Cambridge [England]},
booktitle = {Hausdorff measures},
isbn = {0521624916},
lccn = {98007167},
publisher = {Cambridge University Press},
language = {eng},
series = {Cambridge mathematical library},
title = {Hausdorff measures},
year = {1998 - 1970},
}

@article{staiger1993kolmogorov,
  title={Kolmogorov complexity and Hausdorff dimension},
  author={Staiger, Ludwig},
  journal={Information and Computation},
  volume={103},
  number={2},
  pages={159--194},
  year={1993},
  publisher={Elsevier}
}

@phdthesis{reimann2004computability,
  title={Computability and fractal dimension},
  author={Reimann, Jan},
  school={Universit\"{a}t Heidelberg},
  year={2004}
}

@article{jarnik1999diophantische,
  title={Diophantische approximationen und hausdorffsches mass},
  author={Jarn{\'\i}k, Vojt{\v{e}}ch},
  journal={Mat. Sb.},
  volune={36},
  pages={371-382},
  year={1929},
}

@article{besicovitch1934sets,
  title={Sets of Fractional Dimensions (IV): On Rational Approximation to Real Numbers},
  author={Besicovitch, AS},
  journal={Journal of the London Mathematical Society},
  volume={1},
  number={2},
  pages={126--131},
  year={1934},
  publisher={Wiley Online Library}
}

@article{jarnik1931simultanen,
  title={{\"U}ber die simultanen diophantischen Approximationen},
  author={Jarn{\'\i}k, Vojt{\v{e}}ch},
  journal={Mathematische Zeitschrift},
  volume={33},
  number={1},
  pages={505--543},
  year={1931},
  publisher={Springer}
}

@article{becher2018irrationality,
  title={Irrationality exponent, Hausdorff dimension and effectivization},
  author={Becher, Ver{\'o}nica and Reimann, Jan and Slaman, Theodore A},
  journal={Monatshefte f{\"u}r Mathematik},
  volume={185},
  number={2},
  pages={167--188},
  year={2018},
  publisher={Springer}
}

@article{staiger2017exact,
  title={Exact constructive and computable dimensions},
  author={Staiger, Ludwig},
  journal={Theory of Computing Systems},
  volume={61},
  number={4},
  pages={1288--1314},
  year={2017},
  publisher={Springer}
}

@article{staiger2020incomputability,
  title={On the incomputability of computable dimension},
  author={Staiger, Ludwig},
  journal={Logical Methods in Computer Science},
  volume={16},
  year={2020},
  publisher={Episciences. org}
}

@incollection{shen2015around,
  title={Around Kolmogorov complexity: basic notions and results},
  author={Shen, Alexander},
  booktitle={Measures of Complexity: Festschrift for Alexey Chervonenkis},
  pages={75--115},
  year={2015},
  publisher={Springer}
}

@article{calude2013liouville,
  title={Liouville, computable, Borel normal and Martin-L{\"o}f random numbers},
  author={Calude, Cristian S and Staiger, Ludwig},
  journal={Theory of Computing Systems},
  volume={62},
  number={7},
  pages={1573--1585},
  year={2018},
  publisher={Springer}
}

@book{beresnevich2006measure,
  title={Measure theoretic laws for lim sup sets},
  author={Beresnevich, Victor and Dickinson, Detta and Velani, Sanju},
  volume={179},
  year={2006},
  publisher={American Mathematical Soc.}
}

@inproceedings{lutz2000gales,
  title={Gales and the constructive dimension of individual sequences},
  author={Lutz, Jack H},
  booktitle={International Colloquium on Automata, Languages, and Programming},
  pages={902--913},
  year={2000},
  organization={Springer}
}

@inproceedings{reimann2005effective,
  title={Effective hausdorff dimension},
  author={Reimann, Jan and Stephan, Frank},
  booktitle={Logic Colloquium},
  volume={1},
  pages={369--385},
  year={2005}
}

@book{downey2010algorithmic,
  title={Algorithmic randomness and complexity},
  author={Downey, Rodney G and Hirschfeldt, Denis R},
  year={2010},
  publisher={Springer Science \& Business Media}
}
\end{document}